\newtheorem{theorem}{Theorem}
\newtheorem{lemma}[theorem]{Lemma}
\newtheorem{question}[theorem]{Question}
\newcommand{\F}{\mathcal{F}}
\newcommand{\pn}{\mathcal{P}([n])}
\begin{document}
\pagestyle{fancy}
\fancyhf{}
\fancyhead [LE, RO] {\thepage}
\fancyhead [CE] {DAVID ELLIS, MARIA-ROMINA IVAN AND IMRE LEADER}
\fancyhead [CO] {SMALL SETS IN UNION-CLOSED FAMILIES}
\renewcommand{\headrulewidth}{0pt}
\renewcommand{\l}{\rule{6em}{1pt}\ }
\title{\Large{\textbf{SMALL SETS IN UNION-CLOSED FAMILIES}}}
\author{DAVID ELLIS, MARIA-ROMINA IVAN AND IMRE LEADER}
\date{}

\maketitle
\begin{abstract}
Our aim in this note is to show that, for any $\epsilon>0$, there exists a union-closed family $\mathcal F$ with (unique) smallest set $S$ such that no element of $S$ belongs to more than a fraction $\epsilon$ of the sets in $\mathcal F$. More precisely, we give an example of a union-closed family with smallest set of size $k$ such that no element of this set belongs to more than a fraction $(1+o(1))\frac{\log_2 k}{2k}$ of the sets in $\mathcal F$. \par We also give explicit examples of union-closed families containing `small' sets for which we have been unable to verify the Union-Closed Conjecture.
\end{abstract}
\section{Introduction}

If $X$ is a set, a family $\mathcal{F}$ of subsets of $X$ is said to be {\em union-closed} if the union of any two sets in $\F$ is also in $\F$. The Union-Closed Conjecture (a conjecture of Frankl \cite{duffus}) states that if $X$ is a finite set and $\F$ is a union-closed family of subsets of $X$ (with $\F \neq \{\emptyset\}$), then there exists an element $x \in X$ such that $x$ is contained in at least half of the sets in $\F$. Despite the efforts of many researchers over the last forty-five years, and a recent Polymath project \cite{polymath} aimed at resolving it, this conjecture remains wide open. It has only been proved under very strong constraints on the ground-set $X$ or the family $\F$; for example, Balla, Bollob\'as and Eccles \cite{bbe} proved it in the case where $|\F| \geq \tfrac{2}{3} 2^{|X|}$; more recently, Karpas \cite{karpas} proved it in the case where $|\F| \geq (\tfrac{1}{2}-c)2^{|X|}$ for a small absolute constant $c >0$; and it is also known to hold whenever $|X| \leq 12$ or $|\F| \leq 50$, from work of Vu\v{c}kovi\'c and \v{Z}ivkovi\'c \cite{vz} and of Roberts and Simpson \cite{rs}. Note that the Union-Closed Conjecture is not even known to
hold in the weaker form where we replace the fraction $1/2$ by any other
fixed $\epsilon>0$.\footnote{{\em Note added in proof:} shortly before the acceptance of this manuscript, Gilmer [arXiv:2211.09055] obtained a breakthrough on the Union-Closed Conjecture, showing that it holds in the weaker form with the fraction
$1/2$ replaced by $1/100$.} For general background and a wealth of further information on the Union-Closed Conjecture see the survey of Bruhn and Schaudt \cite{bsbook}.

As usual, if $X$ is a set we write $\mathcal{P}(X)$ for its power-set. If $X$ is a finite set and $\F \subset \mathcal{P}(X)$ with $\F \neq \emptyset$, we define the {\em frequency} of $x$ (with respect to $\F$) to be $\gamma_x = |\{A \in \F:\ x \in A\}|/|\F|$, i.e., $\gamma_x$ is the proportion of members of $X$ that contain $x$. If a union-closed family contains a `small' set, what can we say about the frequencies in that set?\par If a union-closed family $\mathcal F$ contains a singleton, then that element clearly has frequency at least $1/2$, while if it contains a set $S$ of size 2 then, as noted by Sarvate and Renaud \cite{sr}, some element of $S$ has frequency at least $1/2$. However, they also gave an example of a union-closed family $\mathcal F$ whose smallest set $S$ has size 3 and yet where each element of $S$ has frequency below $1/2$. Generalising a construction of Poonen \cite{poonen}, Bruhn and Schaudt \cite{bsbook} gave, for each $k\geq 3$, an example of a union-closed family with (unique) smallest set of size $k$ and with every element of that set having frequency below $1/2$.\par However, in these and all other known examples, there is always some element of a minimal-size set having frequency at least $1/3$. So it is natural to ask if there is really a constant lower bound for these frequencies.\par Our aim in this note is to show that this is not the case.
\begin{theorem}
\label{thm:main}
For any positive integer $k$, there exists a union-closed family in which the (unique) smallest set has size $k$, but where each element of this set has frequency
$$(1+o(1))\frac{\log k}{2k}.$$
\end{theorem}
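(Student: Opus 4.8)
The plan is to build $\F$ on a ground set $[k]\sqcup X$ for a suitably chosen finite set $X$, with $S=[k]$ as the unique smallest member, by prescribing for each $Y\subseteq X$ a union‑closed \emph{fibre} $\G_Y=\{G\subseteq[k]:G\cup Y\in\F\}$ on $[k]$. For $\F$ to be union‑closed one needs exactly that the collection $\mathcal{Y}$ of sets $Y$ occurring is itself union‑closed on $X$ and that $\G_{Y_1}\vee\G_{Y_2}\subseteq\G_{Y_1\cup Y_2}$ (pointwise unions); the requirement that $S=[k]$ be the unique smallest set forces $\G_\emptyset=\{[k]\}$, forces $[k]\in\G_Y$ for every $Y$ that occurs (union $S$ with any $A\in\F$), and forces $|Y|\geq k+1-|G|$ whenever $G\cup Y\in\F$ with $G\neq[k]$. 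Under these conditions one has the clean formula $\gamma_i=\big(\sum_Y d_i(\G_Y)\big)\big/\big(\sum_Y|\G_Y|\big)$: the frequency of $i$ is a weighted average over the fibres of its frequency within each fibre. The first step is simply to record this reduction.

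For the construction itself I would partition $[k]=B_1\sqcup\cdots\sqcup B_p$ into blocks, and to each block $B_j$ attach a private \emph{gadget} inside $X$: a large union‑closed family $\A_j$ on a private part $X_j\subseteq X$ together with a distinguished up‑set $\mathcal{T}_j\subseteq\A_j$ of ``activating'' configurations, arranged so that every member of $\mathcal{T}_j$ has size $>k$ (this disposes of the size condition). Take $\mathcal{Y}=\prod_j\A_j$ and, for $Y=\bigcup_jA_j$, put $\G_Y=\{[k]\}\cup\{\bigsqcup_{j\in J}B_j:\emptyset\neq J\subseteq\{j:A_j\in\mathcal{T}_j\}\}$, so a block may be ``switched on'' in the $[k]$‑part only when its gadget is activated. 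It is routine that $\F=\{G\cup Y:Y\in\mathcal{Y},\ G\in\G_Y\}$ is then union‑closed, that $S=[k]$ (occurring as $G=[k],\,Y=\emptyset$) is its unique smallest set, and — because the gadgets sit on disjoint parts of $X$ — that the multiplicity $w_G=|\{Y:G\in\G_Y\}|$ attached to $G=\bigsqcup_{j\in J}B_j$ equals a constant times $\prod_{j\in J}|\mathcal{T}_j|/|\A_j|$. Writing $\lambda=|\A_j|/|\mathcal{T}_j|$ (the same for every $j$), adjoining one block to $G$ divides its weight by $\lambda$, so in the weighted average the blocks are switched on independently with probability $1/(\lambda+1)$; hence the contribution to $\gamma_i$ of the non‑trivial fibre members is about $1/(\lambda+1)$, and taking $\lambda$ of order $k/\log k$ makes this of the required order.

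The step I expect to be the genuine obstacle — and the reason the answer is $\tfrac{\log_2 k}{2k}$ rather than $O(1/k)$ — is controlling the contribution forced by $[k]\in\G_Y$ for \emph{every} occurring $Y$: this gives $w_{[k]}=|\mathcal{Y}|$ and hence $\gamma_i\geq|\mathcal{Y}|/|\F|$. One computes $|\F|=(\lambda+1)^p\cdot(\text{const})$ and $|\mathcal{Y}|=\lambda^{p}\cdot(\text{const})$, so this correction is of order $(\lambda/(\lambda+1))^{p}\approx e^{-p/\lambda}$, which competes directly with the main term $1/(\lambda+1)$. As the blocks partition $[k]$ we only have $p\le k$, and optimising $\lambda$ against $p$ — roughly $\lambda\sim k/\log k$ with $p\sim k$ — balances the two and produces a frequency of the right order; pinning down the constant so that it is exactly $(1+o(1))\tfrac{\log_2 k}{2k}$, rather than merely $O(\tfrac{\log k}{k})$, is the delicate part, and I would expect it to require a more careful choice of gadget (for instance a multi‑scale version of the block decomposition) so that the ``$S$ in every fibre'' loss is absorbed rather than merely balanced against the main term. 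A final routine step checks that the error term is genuinely $o(1)$ and handles values of $k$ that are not of a convenient form by a padding argument.
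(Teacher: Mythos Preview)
Your fibre framework is sound, and with singleton blocks ($p=k$) and independent gadgets the frequency you derive is indeed
\[
\gamma_i \;\approx\; \frac{1}{\lambda+1}+\Bigl(\frac{\lambda}{\lambda+1}\Bigr)^{k}.
\]
Optimising over $\lambda$ gives $(1+o(1))\dfrac{\ln k}{k}=(2\ln 2+o(1))\dfrac{\log_2 k}{2k}$, so you are off by the constant factor $2\ln 2\approx 1.386$, as you yourself suspect. This is a genuine gap: the theorem asserts the sharp constant, and your suggested fix (``a multi-scale version of the block decomposition'') does not close it. The source of the loss is structural: in your construction the fibres in which the block of $i$ is activated are systematically about twice as large as those in which it is not (activation adds one coordinate to the cube), so the weighted ``main term'' you obtain is $\frac{1}{\lambda+1}$ rather than $\tfrac12\cdot(\text{activation fraction})$. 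No choice of independent per-block gadgets can decouple fibre size from activation, and that coupling is exactly what costs you the factor $2\ln 2$.

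The paper's construction supplies the missing idea. Instead of independent gadgets, it takes only the configurations $Y=X\setminus\{x\}$ for $x$ ranging over a single large set $X$, together with a family $A_1,\ldots,A_k$ of $d$-subsets of $X$ built so that every $x\in X$ lies in \emph{exactly} $t$ of the $A_i$ and all $r$-wise intersections (for $r\le t$) have the same prescribed size. The fibre over $X\setminus\{x\}$ is then $\{[k]\}\cup\{\emptyset\neq R\subseteq\{i:x\in A_i\}\}$, of size exactly $2^t$ for every $x$. Now all fibres have equal size, element $i$ is ``activated'' in precisely a $t/k$ fraction of them, and within those its frequency is $\tfrac12+O(2^{-t})$; this yields $\gamma_i\sim\frac{t}{2k}+2^{-t}$, and taking $t=\lfloor\log_2 k\rfloor$, with $d\to\infty$ to kill the remaining fibres over $\emptyset$ and $X$, gives the sharp $(1+o(1))\frac{\log_2 k}{2k}$. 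The design (built from $t$-element transversals of a block partition of $[qk]$) is precisely what makes fibre size independent of which elements are activated, and is the idea your proposal is missing.
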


\noindent (All logarithms in this paper are to base 2. Also, as usual, the $o(1)$ denotes a function of $k$ that tends to zero as $k$ tends to infinity.) The proof of Theorem \ref{thm:main} is by an explicit construction.\\
\par Theorem \ref{thm:main} is asymptotically sharp, in view of results of W\'ojcik \cite{wojcik} and Balla \cite{balla}: W\'ojcik showed that if $S$ is a set of size $k \geq 1$ in a finite union-closed family, then the average frequency of the elements in $S$ is at least $c_k$, where $k\cdot c_k$ is defined to be the minimum average set-size over all union-closed families on the ground-set $[k]$, and Balla showed that $c_k = (1+o(1))\frac{\log k}{2k}$, confirming a conjecture of W\'ojcik from \cite{wojcik}. \\

\par Remarkably, there are union-closed families containing small sets, even sets of size 3, for which we have been unable to verify the Union-Closed Conjecture. We give some examples at the end of the paper.

\section{Proof of main result}

For our construction, we need the following `design-theoretic' lemma.
\begin{lemma}
For any positive integers $k>t$ there exist infinitely many positive integers $d$ such that $t$ divides $dk$ and the following holds. If $X$ is a set of size $dk/t$, then there exists a family $\mathcal{A} = \{A_1,\ldots,A_k\}$ of $k$ $d$-element subsets of $X$, such that each element of $X$ is contained in exactly $t$ sets in $\mathcal{A}$, and for $2 \leq r \leq t$, any $r$ distinct sets in $\mathcal{A}$ have intersection of size
$$d\dfrac{(t-1)(t-2)\ldots(t-r+1)}{(k-1)(k-2)\ldots(k-r+1)},$$
i.e.\
$$|A_{i_1}\cap A_{i_2}\cap\ldots\cap A_{i_r}|=d\dfrac{(t-1)(t-2)\ldots(t-r+1)}{(k-1)(k-2)\ldots(k-r+1)}$$ for any $1\leq i_1 < i_2< \ldots <i_r \leq k$.
\label{structure}
\end{lemma}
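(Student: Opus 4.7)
The plan is to construct the family $\mathcal{A}$ \emph{dually}, by specifying, for each element $x$ of the ground set $X$, the set $T_x = \{i \in [k] : x \in A_i\}$ of indices of those sets which contain $x$. The requirement that every element lies in exactly $t$ sets forces each $T_x$ to be a $t$-subset of $[k]$. Since
\[
|A_{i_1} \cap \cdots \cap A_{i_r}| = |\{x \in X : \{i_1,\ldots,i_r\} \subseteq T_x\}|,
\]
the uniformity conditions in the statement translate into a single requirement on the multiset $\{T_x : x \in X\}$ of $t$-subsets of $[k]$: for each $r \in \{1,2,\ldots,t\}$, every $r$-subset of $[k]$ must be contained in the same number of members of this multiset.

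The cleanest way to arrange this is to take every $t$-subset of $[k]$ with the same multiplicity $\lambda$. Concretely, for each positive integer $\lambda$ I would set $X = \binom{[k]}{t} \times [\lambda]$ and, for each $i \in [k]$,
\[
A_i = \{(T, j) \in X : i \in T\}.
\]
A direct count gives $|X| = \lambda \binom{k}{t}$ and $|A_i| = \lambda \binom{k-1}{t-1}$ for every $i$, so setting $d := \lambda \binom{k-1}{t-1}$ one has $|X| = dk/t$ (in particular $t \mid dk$), and letting $\lambda$ range over the positive integers yields infinitely many valid values of $d$.

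It then remains to verify the intersection formula. The set $A_{i_1} \cap \cdots \cap A_{i_r}$ consists of the pairs $(T, j) \in X$ with $\{i_1, \ldots, i_r\} \subseteq T$, of which there are exactly $\lambda \binom{k-r}{t-r}$; a short manipulation of binomial coefficients shows that this is equal to $d \cdot \frac{(t-1)(t-2)\cdots(t-r+1)}{(k-1)(k-2)\cdots(k-r+1)}$, as required. There is no real obstacle here: once the dualisation is in place the problem reduces to taking $\lambda$ disjoint copies of the complete $t$-uniform hypergraph on $[k]$, and the remainder is routine counting.
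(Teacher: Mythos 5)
Your proposal is correct and is essentially the same construction as the paper's: both realise $A_i$ as the set of $t$-subsets of $[k]$ containing $i$ (the complete $t$-uniform hypergraph design), with the only difference being how one scales to get infinitely many $d$ — you take $\lambda$ disjoint copies of $\binom{[k]}{t}$, whereas the paper replaces each point of $[k]$ by a block of $q$ points, giving $\lambda = q^t$. Your version is marginally cleaner, and the dual framing via $T_x$ is a nice way to motivate it, but the underlying design is identical.
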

\begin{proof}
Let $q$ be a positive integer, and set $d=\binom{k-1}{t-1} q^t$; we will take $|X|=\binom{k}{t}q^t$. Partition $[qk]$ into $k$ sets, $B_1, B_2,\ldots, B_k$ say, each of size $q$; we call these sets `blocks'. We let $X$ be the set of all $t$-element subsets of $[qk]$ that contain at most one element from each block. For each $i \in [k]$ we let $A_i$ be the family of all sets in $X$ that contain an element from the block $B_i$. Clearly, $|A_i|=\binom{k-1}{t-1}q^t=d$ for each $i \in [k]$, and each element of $X$ appears in exactly $t$ of the $A_i$. Also, for example $A_i\cap A_j$ consists of all sets in $X$ that contain both an element from the block $B_i$ and an element from the block $B_j$, so
$$|A_i \cap A_j| = \binom{k-2}{t-2}q^t=\binom{k-1}{t-1}q^t\dfrac{t-1}{k-1}=d\dfrac{t-1}{k-1}.$$
It is easy to check that the other intersections also have the claimed sizes.
\end{proof}

We remark that, in what follows, it is vital that the integer $d$ in Lemma \ref{structure} can be taken to be arbitrarily large as a function of $k$ and $t$.\\

\begin{proof}[Proof of Theorem \ref{thm:main}]
We define $n = dk/t+k$, we take $d \in \mathbb{N}$ as in the above lemma, and we let $X = [dk/t]$; the claim yields a family $\mathcal{A} = \{A_1,\ldots,A_k\}$ of $k$ $d$-element subsets of $X = [dk/t]$ such that each element of $[dk/t]$ is contained in exactly $t$ of the sets in $\mathcal{A}$, and for any $2 \leq r \leq t$, any $r$ distinct sets in $\mathcal{A}$ have intersection of size
$$d\dfrac{(t-1)(t-2)\ldots(t-r+1)}{(k-1)(k-2)\ldots(k-r+1)}.$$
Write $m= dk/t$. We take $\F \subset \pn$ to be the smallest union-closed family containing the $k$-element set $\{m+1,\ldots,m+k\}$ and all sets of the form $\{m+i\} \cup (X \setminus \{x\})$ where $i \in [k]$ and $x \in A_i$.

For brevity, we write $S_0 = \{m+1,m+2,\ldots,m+k\}$. We will show that each element of $S_0$ has frequency
$$(1+o(1)) \frac{\log k}{2k},$$
provided $t$ and $d$ are chosen to be appropriate functions of $k$; moreover, with these choices, $S_0$ will be the smallest set in $\F$.

Clearly, $\F$ contains $S_0$, all sets of the form $S_0 \cup (X \setminus \{x\})$ for $x \in X$, all sets of the form $R \cup X$ where $R$ is a nonempty subset of $S_0$, and finally all sets of the form $R \cup (X \setminus \{x\})$, where $R = \{m+i_1,\ldots,m+i_r\}$ is a nonempty $r$-element subset of $S_0$ and $x \in A_{i_1} \cap A_{i_2} \cap \ldots \cap A_{i_r}$, for $1 \leq r \leq t$. It is easy to see that the family $\mathcal{F}$ contains no other sets.

It follows that
\begin{align*}|\mathcal{F}| & = 1+dk/t+(2^k-1)+\sum_{r=1}^{t}{k \choose r} d\dfrac{(t-1)(t-2)\ldots(t-r+1)}{(k-1)(k-2)\ldots(k-r+1)}\\
&= dk/t+2^k+\frac{dk}{t}\sum_{r=1}^{t} {t \choose r}\\
&= dk/t+2^k+\frac{dk}{t}(2^t-1)\\
& = 2^k + \frac{dk2^t}{t}.
\end{align*}

On the other hand, the number of sets in $\mathcal{F}$ that contain the element $m+1$ is equal to
\begin{align*} & 1+dk/t+2^{k-1}+\sum_{r=1}^{t}{k-1 \choose r-1} d\dfrac{(t-1)(t-2)\ldots(t-r+1)}{(k-1)(k-2)\ldots(k-r+1)}\\
& = 1+dk/t+2^{k-1}+d\sum_{r=1}^{t}{t-1 \choose r-1}\\
& = 1+dk/t+2^{k-1}+2^{t-1}d.
\end{align*}

It follows that the frequency of $m+1$ (or, by symmetry, of any other element of $S_0$) equals
$$\frac{1+kd/t+2^{k-1}+2^{t-1}d}{2^k+dk2^t/t} = \frac{(1+2^{k-1})/d+k/t+2^{t-1}}{2^k/d+k2^t/t}.$$

To (asymptotically) minimise this expression, we take $t = \lfloor \log k \rfloor$ and $d \to \infty$ (for fixed $k$); this yields a union-closed family in which the (unique) smallest set (namely $S_0$) has size $k$, and every element of that set has frequency
$$(1+o(1))\frac{\log k}{2k},$$
proving the theorem.
\end{proof}

\section{An open problem}

We now turn to some explicit examples of union-closed families containing small sets for which we have been unable to establish the Union-Closed Conjecture. For simplicity, we concentrate on the most striking case, when the family contains a set of size 3, and indeed is generated by sets of size 3.\\

\par Our families live on ground-set $\mathbb Z_n^2$, the $n\times n$ torus. 
\begin{question}
\label{conj:tiles-in-grid}
Let $n \in \mathbb{N}$ and let $R \subset \mathbb{Z}_n$ with $|R|=3$. Does the Union-Closed Conjecture hold for the union-closed family $\mathcal F$ of subsets of $\mathbb{Z}_n^2$ generated by all the translates of $R \times \{0\}$ and of $\{0\} \times R$?
\end{question}
\par (Here, as usual, we say a union-closed family $\mathcal{F}$ is {\em generated by} a family $\mathcal{G}$ if it consists of all unions of sets in $\mathcal G$.)

Perhaps the most interesting case is when $n$ is prime. In that case we may assume that $R=\{0,1,r\}$ for some $r$, and so one feels that the verification of the Union-Closed Conjecture should be a triviality, but it seems not to be. Note that all the families in Question \ref{conj:tiles-in-grid} are transitive families, in the sense that all points `look the same', so that the Union-Closed Conjecture is equivalent to the assertion that the average size of the sets in the family is at least $n^2/2$.\par We mention that the corresponding result in $\mathbb Z_n$ (in other words, the union-closed family on ground-set $\mathbb Z_n$ generated by translates of $R$) is known to hold: this is proved in \cite{ael}.\\

\par We have verified the special case of Question \ref{conj:tiles-in-grid} where $R = \{0,1,2\}$. A sketch of the proof is as follows. Assume that $n \geq 6$, and let $\mathcal{F} \subset \mathcal{P}(\mathbb{Z}_n^2)$ be the union-closed family generated by all translates of $\{0,1,2\} \times \{0\}$ and of $\{0\} \times \{0,1,2\}$ (we call these translates {\em 3-tiles}, for brevity). Let $C = \{0,1,2,3\}^2$, a $4\times 4$ square. Consider the bipartite graph $H = (\mathcal{X},\mathcal{Y})$ with vertex-classes $\mathcal{X}$ and $\mathcal{Y}$, where $\mathcal{X}$ consists of all subsets of $C$ with size less than 8 that are intersections with $C$ of sets in $\F$, $\mathcal{Y}$ consists of all subsets of $C$ with size greater than 8 that are intersections with $C$ of sets in $\F$, and we join $S \in \mathcal{X}$ to $S' \in \mathcal{Y}$ if $|S'| + |S| \geq 16$ and $S' =S \cup U $ for some union $U$ of 3-tiles that are contained within $C$. It can be verified (by computer) that $H$ has a matching $m:\mathcal{X} \to \mathcal{Y}$ of size $|\mathcal{X}|=16520$. Such a matching $m$ gives rise to an injection
$$f:\{S \in \F:\ |S \cap C| < |C|/2\} \to \{S \in \F:\ |S \cap C| > |C|/2\}$$ given by $$\quad f(S) = (S \setminus C) \cup m(S \cap C)$$
with the property that $|S \cap C|+|f(S)\cap C| \geq |C|$ for all $S \in \F$ with $|S \cap C| < |C|/2$. It follows that a uniformly random subset of $\F$ has intersection with $|C|$ of expected size at least $|C|/2$, which in turn implies that there is an element of $C$
 with frequency at least $1/2$ (and in fact, since $\F$ is transitive, every element has frequency at least 1/2).
 
 We remark that this proof does not work if one tries to replace $C = \{0,1,2,3\}^2$ by $\{0,1,2\}^2$, as the resulting bipartite graph $H' = (\mathcal{X}',\mathcal{Y}')$ does not contain a matching of size $|\mathcal{X}'|$.

We remark also that it would be nice to find a non-computer proof of the above result.\\

\par {\bf Acknowledgement}: We are very grateful to Igor Balla for bringing the papers \cite{balla} and \cite{wojcik} to our attention.

\end{document}